
\documentclass[final,11pt]{elsarticle}

\usepackage{verbatim,a4wide}

\usepackage{amsmath}
\usepackage{amssymb}
\usepackage{amsthm}

\usepackage{algorithm}
\usepackage{algpseudocode}

\usepackage{xcolor}

\usepackage{tikz}
\usetikzlibrary{patterns}

\usepackage{hyperref}




\usepackage[cp1250]{inputenc}
\usepackage[OT4]{fontenc}
\usepackage[english]{babel}

\usepackage{csquotes}


\numberwithin{equation}{section}
\numberwithin{algorithm}{section}
\numberwithin{figure}{section}
\numberwithin{table}{section}


\newtheorem{theorem}{Theorem}[section]
\newtheorem{lemma}[theorem]{Lemma}
\newtheorem{remark}[theorem]{Remark}

\newtheorem{example}[theorem]{Example}

\newtheorem{problem}[theorem]{Problem}

\algrenewcommand{\algorithmiccomment}[1]{\textbf{//}\,#1}




\begin{document}

\begin{frontmatter}

\title{Efficient evaluation of Bernstein-B\'{e}zier coefficients of
B-spline basis functions over one knot span}

\author[A1]{Filip Chudy\corref{cor}}
\ead{Filip.Chudy@cs.uni.wroc.pl}

\author[A1]{Pawe{\l} Wo\'{z}ny}
\ead{Pawel.Wozny@cs.uni.wroc.pl}

\cortext[cor]{Corresponding author.}

\address[A1]{Institute of Computer Science, University of Wroc{\l}aw,
             ul.~Joliot-Curie 15, 50-383 Wroc{\l}aw, Poland}


\begin{abstract}
New differential-recurrence relations for B-spline basis functions are given.
Using these relations, a recursive method for finding the Bernstein-B\'{e}zier
coefficients of B-spline basis functions over a single knot span is proposed.
The algorithm works for any knot sequence and has an asymptotically optimal 
computational complexity. Numerical experiments show that the new method gives 
results which preserve a high number of digits when compared to an approach 
which uses the well-known de Boor-Cox formula.

\end{abstract}

%
%
%
%

\begin{keyword}
B-spline basis functions, Bernstein-B\'{e}zier form, recurrence relations,
B-spline curves, B-spline surfaces, de Boor-Cox algorithm.
\end{keyword}

\end{frontmatter}

\section{Introduction}                                  \label{S:Introduction}

Recently, in~\cite{ChW2023} (see also \cite{FChPhD}), we have proposed a fast
recursive algorithm for computing the Bernstein-B\'{e}zier coefficients of all
B-spline (basis) functions of degree $m$ over all not-trivial knot spans
$[t_j,t_{j+1})\subset[t_0,t_n]$ $(m,n,j\in\mathbb N;\ 0\leq j<n)$ under the
natural assumption that no inner knot has multiplicity greater than $m$, i.e.,
the B-spline basis functions are continuous.
Our approach is very efficient due to a new differential-recurrence relation
for the B-spline functions of the same degree which implies the recurrence
relation for their Bernstein-B\'{e}zier coefficients.

If the boundary knot $t_n$ has multiplicity equal to $m+1$, the method is
asymptotically optimal. In that case, the computational complexity of the new
algorithm is $O(n_\varepsilon m^2)$, where $n_\varepsilon$ denotes the number
of not-empty knot spans. It means the number of floating point operations
is proportional to the number of coefficients which have to be found. If the
multiplicity of the knot $t_n$ is lower than $m+1$, the computational
complexity is increased by at most $O(m^3)$.

We have also shown that the algorithm is numerically efficient and can be used,
for example, in fast rendering of B-spline curves and surfaces. Note that the
new approach solves these problems with lower computational complexity than
using the famous de Boor-Cox algorithm. Furthermore, when the
Bernstein-B\'{e}zier form of a B-spline (basis) function is known, one can
evaluate it in linear time with respect to its degree by performing the
geometric algorithm proposed recently by the authors in \cite{WCh2020}.

Certainly, the Bernstein-B\'{e}zier form of all B-spline functions over all
not-trivial knot spans can also be found by other methods (cf., e.g.,
\cite{BC2021,BC2022,Boehm80,Boehm77,CR04,CLR80,RS04,Sablonniere78,TS2020}).
However, they have higher computational complexity as they rely on finding all
Bernstein-B\'{e}zier coefficients of all B-spline basis functions of degrees
lower than $m$. For details and the review of the possible solutions, see
\cite[pp.~3--4]{ChW2023}.

Let $m,n \in \mathbb{N}$. Suppose the knots $t_i$
$(-m\leq i\leq m+n)$ satisfy
\begin{equation}\label{E:KnotSequence}
 \underbrace{t_{-m} \leq \ldots \leq t_{-1} \leq t_0}_\text{boundary knots}
 \leq
 \underbrace{t_1 \leq \ldots \leq t_{n-1}}_\text{inner knots}
 \leq
 \underbrace{t_n \leq t_{n+1} \leq \ldots \leq t_{n+m}}_\text{boundary knots}
                                                              \qquad (t_0<t_n).
\end{equation}
The \textit{B-spline (basis) functions}
$N_{m,-m},N_{m,-m+1},\ldots,N_{m,n-1}$ of degree $m$ over $[t_0, t_n]$
are given by
\begin{equation}\label{E:DefBSpline}
N_{m,i}(u) := (t_{i+m+1}-t_i) [t_i, t_{i+1}, \ldots, t_{i+m+1}](t-u)^m_+,
\end{equation}
where the \textit{generalized divided difference}
(see, e.g., \cite[\S4.2]{DB} or \cite[p.~7]{Dierckx93}) acts on the
variable $t$, and
\begin{equation}\label{E:DefTruncatedPower}
(x-c)^m_+:=\left\{
               \begin{array}{ll}
                   (x-c)^m & (x \geq c),\\
                   0       & (x < c)
                \end{array}
            \right.
\end{equation}
is the \textit{truncated power function}.

Note that, for $i=0,1,\ldots,n-1$,
\begin{equation}\label{E:BSplineN0iExplicit}
N_{0,i}(u)=
\left\{
\begin{array}{ll}
 1 & (u \in [t_i, t_{i+1})),\\[1ex]
 0 & \mbox{otherwise},
\end{array}
\right.
\end{equation}
as well as
\begin{equation}\label{E:BSplineSupp}
\mbox{supp$(N_{m,i})$}=[t_i,t_{i+m+1}),
\end{equation}
where $-m\leq i<n$.

For more properties of the B-spline basis and for applications of splines in
computer-aided geometric design, approximation theory and numerical analysis,
see, e.g., \cite{DB, Dierckx93, Farin2002, Goldman, NURBS, Prautzsch}.

Let the \textit{adjusted Bernstein-B\'{e}zier basis form} of the B-spline
function $N_{m,i}$ $(-m\leq i<n)$ over a single non-empty \textit{knot span}
$[t_j, t_{j+1}) \subset [t_0, t_n]$ ($j=i,i+1,\ldots,i+m$) be
\begin{equation}\label{E:BSplineABB}
N_{m,i}(u)=\sum_{k=0}^m b^{(i,j)}_{m,k} B^m_k\left(\dfrac{u - t_j}
                                                     {t_{j+1} - t_j}\right)
                                                \qquad (t_j \leq u < t_{j+1}).
\end{equation}
Here $B^n_i$ $(0\leq i\leq n;\; n\in\mathbb N)$ is the \textit{$i$th Bernstein
(basis) polynomial of degree $n$} given by the formula
\begin{equation}\label{E:Def_BernPoly}
B^n_i(t):=\binom{n}{i} t^i (1-t)^{n-i}.
\end{equation}

In~\cite{ChW2023} (cf.~also \cite[\S3]{FChPhD}), the following problem has been
considered.

\begin{problem}\label{P:BSpline1}
Find the adjusted Bernstein-B\'{e}zier basis coefficients $b_{m,k}^{(i,j)}$
$(0\leq k\leq m)$ (cf.~\eqref{E:BSplineABB}) of all non-trivial functions
$N_{m,i}$ over all non-empty knot spans $[t_j,t_{j+1})\subset[t_0,t_n]$, i.e.,
for $j=0,1,\ldots,n-1$ and $i=j-m,j-m+1,\ldots,j$ (cf.~\eqref{E:BSplineSupp}).
\end{problem}

The proposed solution of Problem~\ref{P:BSpline1} uses the new
differential-recurrence relation for B-spline functions of the same degree.
Namely, in~\cite[Thm 3.1]{ChW2023}, it was proven that if the knot sequence is
\textit{clamped}, i.e.,
\begin{equation}\label{E:KnotClamped}
t_{-m} = t_{-m+1} = \ldots = t_0 < t_1 < \ldots < t_n =
                                                    t_{n+1} = \ldots = t_{n+m},
\end{equation}
then
\begin{equation}\label{E:BSplineDiffRecBase}
N_{m,i}(u) + \dfrac{t_i - u}{m} N_{m,i}'(u) =
      \dfrac{t_{m+i+1} - t_i}{t_{m+i+2} - t_{i+1}}
         \left(N_{m,i+1}(u) + \dfrac{t_{m+i+2} - u}{m} N_{m,i+1}'(u)\right),
\end{equation}
where $i=-m-1,-m+1,\ldots,n-1$, and a convention is adopted that
\begin{equation}\label{E:ConventionI}
N_{p,q}\equiv0\quad \mbox{for $q<-p$ or $q\geq n$}.
\end{equation}

Next, it has been observed that Eq.~\eqref{E:BSplineDiffRecBase} implies
the recurrence relation connecting the coefficients
$b_{m,k}^{(i,j)},b_{m,k+1}^{(i,j)}$ and
$b_{m,k}^{(i+1,j)},b_{m,k+1}^{(i+1,j)}$. See~\cite[Thm 4.3]{ChW2023}.

Using the mentioned facts, the fast and numerically efficient algorithm for
solving Problem~\ref{P:BSpline1} has been proposed.
See~\cite[Thm 4.4, Alg.~4.1 and \S6]{ChW2023}. Recall that if the boundary knot
$t_n$ has multiplicity equal to $m+1$ the method is asymptotically optimal ---
its computational complexity is $O(n_\varepsilon m^2)$, where $n_\varepsilon$
is the number of the non-trivial knot spans. Otherwise, the complexity
increases by at most $O(m^3)$.

\section{The problem}                                        \label{S:Problem}

At the end of our paper~\cite{ChW2023}, we wrote that
\enquote{Further research is required to find the application of the new 
differential-recurrence relation in finding the Bernstein-B\'{e}zier 
coefficients of B-spline basis functions over a single knot span}. The point is 
that the proposed approach is too expensive if we want to compute the 
Bernstein-B\'{e}zier form of B-spline functions over one fixed knot span only. 
In short, the main reason is as follows: to find the Bernstein-B\'{e}zier 
coefficients of B-spline functions over non-empty interval $[t_j,t_{j+1})$ 
using the recursive algorithm proposed by us, one has to previously compute the 
Bernstein-B\'{e}zier form of B-spline functions over all non-empty knot spans
$[t_{n-1},t_n),[t_{n-2},t_{n-1}),\ldots,[t_{j+1},t_{j+2})$.

Taking into account the mentioned drawback, the main goal of this article is
to propose an optimal method for solving the following problem.

\begin{problem}\label{P:BSpline2}
Suppose the knot span $[t_j,t_{j+1})$ is not empty with fixed $j$
$(0\leq j<n)$. Find the adjusted Bernstein-B\'{e}zier basis coefficients
$b_{m,k}^{(i,j)}$ $(0\leq k\leq m)$ (cf.~\eqref{E:BSplineABB}) of all
non-trivial functions $N_{m,i}$ over $[t_j,t_{j+1})$, i.e., for
$i=j-m,j-m+1,\ldots,j$ (cf.~\eqref{E:BSplineSupp}).
\end{problem}

Note that to solve Problem~\ref{P:BSpline2}, we have to compute $(m+1)^2$
Bernstein-B\'{e}zier basis coefficients $b_{m,k}^{(i,j)}$. In
Section~\ref{S:Solution}, using the results from \S\ref{S:NewResults},
we give the recursive algorithm for computing all these quantities in $O(m^2)$
time, which is asymptotically optimal from the computational complexity
standpoint. Certainly, the algorithm is useful, for example, when rendering
fragments of a B-spline curve or surface
(see also applications considered in~\cite[\S5]{ChW2023}).

Let us stress that one can solve Problem~\ref{P:BSpline2} using the methods
proposed, e.g., in
\cite{BC2021,BC2022,Boehm80,Boehm77,CR04,CLR80,RS04,Sablonniere78,TS2020}).
However they have higher computational complexity, i.e., at least $O(m^3)$.

\section{Some new results for B-spline basis functions}   \label{S:NewResults}

We start with proving two new properties of B-spline basis functions which
allow us to show that the differential-recurrence
relation~\eqref{E:BSplineDiffRecBase} holds even if the knot sequence is not
\textit{clamped} (cf.~\eqref{E:KnotClamped}).

\begin{lemma}\label{L:NewProperties}
For any knot sequence~\eqref{E:KnotSequence}, the B-spline functions $N_{m,i}$
satisfy the following identities:
\begin{eqnarray}
&&\label{E:NewPropertyI}
mN_{m,i}(u)+(t_{m+i+1}-u)N'_{m,i}(u)=
                      m(t_{m+i+1}-t_i)\frac{N_{m-1,i}(u)}
                                           {t_{m+i}-t_{i}},\\[1ex]
&&\label{E:NewPropertyII}
mN_{m,i}(u)+(t_{i}-u)N'_{m,i}(u)=m(t_{m+i+1}-t_i)\frac{N_{m-1,i+1}(u)}
                                                    {t_{m+i+1}-t_{i+1}},
\end{eqnarray}
where $-m\leq i\leq n-1$ (cf.~\eqref{E:ConventionI}) and we assume that for
any quantity $Q$
\begin{equation}\label{E:ConventionII}
\frac{Q}{t_k-t_\ell}:=0\quad \mbox{if the knots $t_k$ and $t_\ell$ are equal},
\end{equation}
as well as that $N_{-1,k}\equiv 0$ for all $k$.
\end{lemma}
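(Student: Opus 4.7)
The plan is to derive both identities as direct consequences of two classical facts about B-splines of degree $m\geq1$: the de Boor--Cox recurrence
\[
N_{m,i}(u)=\frac{u-t_i}{t_{i+m}-t_i}\,N_{m-1,i}(u)
          +\frac{t_{i+m+1}-u}{t_{i+m+1}-t_{i+1}}\,N_{m-1,i+1}(u),
\]
and the standard derivative formula
\[
N'_{m,i}(u)=\frac{m}{t_{i+m}-t_i}\,N_{m-1,i}(u)
           -\frac{m}{t_{i+m+1}-t_{i+1}}\,N_{m-1,i+1}(u),
\]
both read together with convention~\eqref{E:ConventionII}. Both are standard and follow from the divided-difference representation~\eqref{E:DefBSpline}; see, e.g., \cite{DB,Dierckx93,NURBS}.

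To obtain \eqref{E:NewPropertyI}, I would take $m$ times the recurrence plus $(t_{m+i+1}-u)$ times the derivative formula. The coefficient of $N_{m-1,i}(u)$ on the right-hand side then becomes
\[
\frac{m(u-t_i)+m(t_{m+i+1}-u)}{t_{m+i}-t_i}=\frac{m(t_{m+i+1}-t_i)}{t_{m+i}-t_i},
\]
while the coefficient of $N_{m-1,i+1}(u)$ collapses to
\[
\frac{m(t_{m+i+1}-u)-m(t_{m+i+1}-u)}{t_{m+i+1}-t_{i+1}}=0,
\]
which is exactly \eqref{E:NewPropertyI}. An entirely analogous computation, with the multiplier $(t_{m+i+1}-u)$ replaced by $(t_i-u)$, yields \eqref{E:NewPropertyII}: this time the coefficient of $N_{m-1,i}(u)$ reduces to $0$, and the coefficient of $N_{m-1,i+1}(u)$ reduces to $m(t_{m+i+1}-t_i)/(t_{m+i+1}-t_{i+1})$.

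The main, though still routine, obstacle is the bookkeeping at the boundary of the index range and at coincident knots, where convention~\eqref{E:ConventionII} forces one or more of the terms above to be read as zero. I would verify separately that in each such degenerate case---for instance when $t_{m+i}=t_i$, or when $i$ is at the extreme of its range so that $N_{m-1,i}$ or $N_{m-1,i+1}$ is identically zero by~\eqref{E:ConventionI}---both sides of \eqref{E:NewPropertyI} and \eqref{E:NewPropertyII} reduce to the same value (typically $0$). These verifications do not affect the core linear-combination argument.
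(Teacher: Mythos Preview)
Your argument is correct, but it proceeds along a genuinely different route from the paper's. The paper proves the lemma \emph{from the definition}: it writes $mN_{m,i}(u)=m(t_{m+i+1}-t_i)[t_i,\ldots,t_{i+m+1}]\{(t-u)\cdot(t-u)_+^{m-1}\}$, applies the Leibniz rule for generalized divided differences to split off the factor $(t-u)$, and reads off~\eqref{E:NewPropertyII} directly; the degenerate case $t_{m+i+1}=t_{i+1}$ is handled by observing that the relevant divided difference of $(t-u)_+^{m-1}$ vanishes identically. By contrast, you take the de Boor--Cox recurrence and the derivative formula as known inputs and obtain the lemma as a linear combination of the two. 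The paper's approach is more self-contained and in fact the authors stress in the remark following the lemma that one can go the \emph{other} way---recovering both de Boor--Cox and the derivative formula from \eqref{E:NewPropertyI} and \eqref{E:NewPropertyII}---so they regard the lemma as a fundamental identity rather than a corollary. Your approach is shorter and more elementary provided one is willing to import the two classical formulas; together the two arguments show that the lemma is, up to linear combinations, equivalent to the pair (de Boor--Cox, derivative formula). For completeness you should also note the case $m=0$, where both sides of each identity vanish trivially by the convention $N_{-1,k}\equiv 0$.
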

\begin{proof}
It is easy to check that the lemma is true if $m\in\{0,1\}$. Let us fix $m>1$
and $i$ $(-m\leq i<n)$. First we prove~\eqref{E:NewPropertyII} assuming that
$t_{m+i+1}\neq t_{i+1}$. From the definitions~\eqref{E:DefBSpline}
and~\eqref{E:DefTruncatedPower}, we have
\begin{eqnarray*}
mN_{m,i}(u)&=&m(t_{m+i+1}-t_i)[t_i,t_{i+1},\ldots,t_{i+m+1}](t-u)_+^m\\
 &=&m(t_{m+i+1}-t_i)[t_i,t_{i+1}\ldots,t_{i+m+1}]
                                    \left\{(t-u)\cdot(t-u)_+^{m-1}\right\}.
\end{eqnarray*}
Using the Leibniz' formula for the generalized divided differences (see, e.g.,
\cite[Thm 4.4.12]{DB}), we obtain
\begin{eqnarray*}
mN_{m,i}(u)&=&m(t_{m+i+1}-t_i)
           \Big\{
                [t_i](t-u)\cdot[t_i,t_{i+1},\ldots,t_{i+m+1}](t-u)_+^{m-1}\\
                   &&\hspace{3cm}+
                       1\cdot[t_{i+1},t_{i+2},\ldots,t_{i+m+1}](t-u)_+^{m-1}+
                       0+\ldots+0
           \Big\}\\
 &=&-(t_i-u)N'_{mi}(u)+m(t_{m+i+1}-t_i)\frac{N_{m-1,i+1}(u)}
                                            {t_{i+m+1}-t_{i+1}}.
\end{eqnarray*}
Here the generalized divided difference acts on the variable $t$.

Now, if $t_{m+i+1}=t_{i+1}$ then
$[t_{i+1},\ldots,t_{i+m+1}](t-u)_+^{m-1}\equiv0$ (cf.~\eqref{E:KnotSequence})
and Eq.~\eqref{E:NewPropertyII} is also true (cf.~\eqref{E:ConventionII}).

The relation~\eqref{E:NewPropertyI} can be derived in a similar way.
\end{proof}

\begin{remark}
Observe that Eqs~\eqref{E:NewPropertyI} and~\eqref{E:NewPropertyII} can be seen
as {\rm fundamental properties} of B-spline basis functions. Indeed,
by adding~\eqref{E:NewPropertyI} multiplied by $u-t_i$
and~\eqref{E:NewPropertyII} multiplied by $t_{m+i+1}-u$ and next dividing
by $m(t_{m+i+1}-t_i)$, one can recover the famous de Boor-Cox recurrence
relation (see, e.g., \cite[Eqs.~(2.5)]{NURBS}), i.e.,
\begin{equation}\label{E:BSplineRecRel}
N_{m,i}(u)=(u-t_i)\frac{N_{m-1,i}(u)}{t_{m+i}-t_{i}}+
                 (t_{m+i+1}-u)\frac{N_{m-1,i+1}(u)}{t_{m+i+1}-t_{i+1}}
                                                      \qquad(-m\leq i<n).
\end{equation}

We can also easily obtain the following well-known representation of the
derivative of $N_{m,i}$ in terms of two B-spline basis functions of degree
$m-1$ (see, e.g., \cite[Eqs.~(2.7)]{NURBS}):
$$
N'_{m,i}(u)=m\left(\frac{N_{m-1,i}(u)}{t_{m+i}-t_{i}}-
                  \frac{N_{m-1,i+1}(u)}{t_{m+i+1}-t_{i+1}}\right)
                                                     \qquad(-m\leq i<n).
$$
It is enough to subtract~\eqref{E:NewPropertyII} from~\eqref{E:NewPropertyI}
and next divide both sides by $t_{m+i+1}-t_i$.

Note that in both cases, we use the conventions~\eqref{E:ConventionI}
and~\eqref{E:ConventionII}.
\end{remark}

Using Lemma~\ref{L:NewProperties}, one can observe that the assumption
of~\cite[Thm 3.1]{ChW2023} (cf.~\eqref{E:KnotClamped}) can be relaxed.
As a consequence, we obtain the following result.

\begin{theorem}\label{T:NewResult}
For any knot sequence~\eqref{E:KnotSequence}, where no inner knot has
multiplicity greater than $m$, the following differential-recurrence relation
for B-spline functions of the same degree $m$ holds true:
$$
N_{m,i}(u) + \dfrac{t_i - u}{m} N_{m,i}'(u) =
      \dfrac{t_{m+i+1} - t_i}{t_{m+i+2} - t_{i+1}}
         \left(N_{m,i+1}(u) + \dfrac{t_{m+i+2} - u}{m} N_{m,i+1}'(u)\right),
$$
where $i=-m-1,-m+1,\ldots,n-1$ (cf.~\eqref{E:ConventionI}).
\end{theorem}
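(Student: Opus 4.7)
The plan is to derive the identity algebraically from the two relations established in Lemma~\ref{L:NewProperties}, showing that both sides reduce to the same multiple of $N_{m-1,i+1}(u)$.

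First I would rewrite the left-hand side. Dividing~\eqref{E:NewPropertyII} by $m$ immediately yields
\[
N_{m,i}(u) + \dfrac{t_i - u}{m} N_{m,i}'(u) = (t_{m+i+1}-t_i)\dfrac{N_{m-1,i+1}(u)}{t_{m+i+1}-t_{i+1}}.
\]
Next, I would handle the parenthesized expression on the right-hand side by applying~\eqref{E:NewPropertyI} with $i$ replaced by $i+1$ and dividing by $m$:
\[
N_{m,i+1}(u) + \dfrac{t_{m+i+2}-u}{m} N_{m,i+1}'(u) = (t_{m+i+2}-t_{i+1})\dfrac{N_{m-1,i+1}(u)}{t_{m+i+1}-t_{i+1}}.
\]
Multiplying this by the prefactor $\dfrac{t_{m+i+1}-t_i}{t_{m+i+2}-t_{i+1}}$ cancels $(t_{m+i+2}-t_{i+1})$ and produces exactly the right-hand side of the first display, proving the claimed equality.

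The main obstacle is the careful treatment of the degenerate cases permitted by~\eqref{E:ConventionII}, which must be checked separately since the cancellation above is formally carried out as if the relevant knot differences were nonzero. Under the hypothesis that no inner knot has multiplicity greater than $m$, I would argue as follows. If $t_{m+i+1}=t_{i+1}$, then the supports~\eqref{E:BSplineSupp} force $N_{m-1,i+1}\equiv 0$; by convention both Lemma identities still hold with the fraction interpreted as $0$, and the theorem reduces to an identity between terms involving only $N_{m,i},N_{m,i+1}$ and their derivatives, which can be verified directly from the support structure. The case $t_{m+i+2}=t_{i+1}$ cannot occur under the assumption on multiplicities (it would force an inner knot of multiplicity $m+1$), except possibly at boundary indices, where both $N_{m,i+1}$ and $N_{m,i+1}'$ vanish on the interior of $[t_0,t_n]$ by~\eqref{E:ConventionI} or the support property, and the outer fraction on the right-hand side is to be read as $0$ via~\eqref{E:ConventionII}; the left-hand side then must also be shown to vanish, which again follows from the support analysis. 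Once these boundary checks are in place, the algebraic cancellation above completes the proof.
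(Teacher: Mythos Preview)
Your proposal is correct and follows essentially the same route as the paper: both arguments use~\eqref{E:NewPropertyII} for the left-hand side and~\eqref{E:NewPropertyI} with $i\mapsto i+1$ for the right-hand side, then equate via the common term $N_{m-1,i+1}(u)/(t_{m+i+1}-t_{i+1})$. The paper's version is terser (it simply divides the two lemma identities by $m(t_{m+i+1}-t_i)$ and $m(t_{m+i+2}-t_{i+1})$ respectively and observes the right-hand sides coincide) and does not spell out the degenerate-denominator cases you discuss, relying instead on the standing conventions~\eqref{E:ConventionI} and~\eqref{E:ConventionII}.
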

\begin{proof}
It is enough to observe that the left-hand side of Eq.~\eqref{E:NewPropertyII}
divided by $m(t_{m+i+1}-t_i)$ is equal to the left-hand side
of~\eqref{E:NewPropertyI} for $i:=i+1$ divided by $m(t_{m+i+2}-t_{i+1})$.
\end{proof}

\begin{remark}\label{R:Assumption}
In the sequel, we assume that no inner knot $t_1, t_2, \ldots, t_{n-1}$ has
multiplicity greater than $m$. This natural assumption guarantees the B-spline
functions' continuity in $(t_0, t_n)$.

If an inner knot $t_i$ ($i=1, 2, \ldots, n-1$) has multiplicity greater than 
$m$, then every B-spline function of degree $m$ based on these knots
is identical to zero either for $t > t_i$ or for $t < t_i$.
Then the B-spline basis consists of two independent bases
and the problem can be divided into two smaller ones:
one for the non-empty knot spans in the interval $(t_0, t_i)$
and the other in $(t_i, t_n)$.

\end{remark}

Using Theorem~\ref{T:NewResult} and the approach presented in
\cite[\S4.2]{ChW2023}, one can construct the following recurrence relation for
the adjusted Bernstein-B\'{e}zier coefficients $b_{m,k}^{(i,j)}$
(see~\eqref{E:BSplineABB}) of the B-spline functions (cf.~also
\cite[Thm 4.3]{ChW2023}).

\begin{theorem}\label{T:MainRecurrence}
Suppose that the knot sequence has the form~\eqref{E:KnotSequence} and
the assumption in Remark~\ref{R:Assumption} holds true.
Let $0\leq j\leq n-1$ and $j-m<i<j$. Assume that the interval $[t_j,t_{j+1})$
is non-empty.

The adjusted Bernstein-B\'{e}zier coefficients $b_{m,k}^{(i,j)}$,
$$
N_{m,i}(u)=\sum_{k=0}^m b^{(i,j)}_{m,k}B^m_k\left(\dfrac{u-t_j}
                                                   {t_{j+1}-t_j}\right)
                                               \qquad (t_j \leq u < t_{j+1}),
$$
satisfy the following recurrence relation:
\begin{equation}\label{E:MainRecurrence}
b_{m,k}^{(i,j)}=\dfrac{t_j-t_i}{t_{j+1}-t_i}b_{m,k+1}^{(i,j)}+
                                    \dfrac{v_{mi}}{t_{j+1}-t_i}
                   \left(
                      (t_{j+1}-t_{m+i+2})b_{m,k}^{(i+1,j)}+
                      (t_{m+i+2}-t_j)b_{m,k+1}^{(i+1,j)}
                   \right),
\end{equation}
where $k=m-1,m-2,\ldots,0$, and $v_{mi}:=(t_{m+i+1}-t_i)/(t_{m+i+2}-t_{i+1})$.
\end{theorem}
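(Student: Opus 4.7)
The plan is to substitute the adjusted Bernstein-B\'ezier expansion~\eqref{E:BSplineABB} for $N_{m,i}(u)$ and $N_{m,i+1}(u)$ on the knot span $[t_j,t_{j+1})$ into the differential-recurrence identity of Theorem~\ref{T:NewResult}, rewrite both sides in the Bernstein basis of degree $m-1$, and then equate coefficients. Throughout, I set $s:=(u-t_j)/(t_{j+1}-t_j)$, so that each side of the identity becomes a polynomial in $s$ of degree~$m$. The hypothesis $t_j<t_{j+1}$ together with $j-m<i<j$ forces $t_{j+1}-t_i>0$, which legitimises the final division by $t_{j+1}-t_i$.

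The key technical step is an identity describing how the operator $L_c\colon p(u)\mapsto p(u)+\frac{c-u}{m}p'(u)$ acts on a single Bernstein polynomial. Setting $\alpha:=(c-t_j)/(t_{j+1}-t_j)$, the affine relation $(c-u)/(t_{j+1}-t_j)=\alpha-s$, the chain rule, and the classical derivative formula $(B^m_k)'(s)=m\bigl(B^{m-1}_{k-1}(s)-B^{m-1}_k(s)\bigr)$ together yield
$$
L_c\bigl[B^m_k(s)\bigr]=B^m_k(s)+(\alpha-s)\bigl(B^{m-1}_{k-1}(s)-B^{m-1}_k(s)\bigr),
$$
and one application of the identity $B^m_k(s)=sB^{m-1}_{k-1}(s)+(1-s)B^{m-1}_k(s)$ collapses the right-hand side to $\alpha B^{m-1}_{k-1}(s)+(1-\alpha)B^{m-1}_k(s)$. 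This is the only step in the argument that requires any real calculation.

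Applying $L_{t_i}$ termwise to $N_{m,i}(u)=\sum_k b^{(i,j)}_{m,k}B^m_k(s)$ and $L_{t_{m+i+2}}$ termwise to $N_{m,i+1}(u)=\sum_k b^{(i+1,j)}_{m,k}B^m_k(s)$ converts Theorem~\ref{T:NewResult} into an equality between two expansions in $B^{m-1}_0(s),\ldots,B^{m-1}_{m-1}(s)$ (the boundary terms $B^{m-1}_{-1}$ and $B^{m-1}_m$ vanish by convention). Linear independence of the $B^{m-1}_k$'s forces the coefficient of each $B^{m-1}_k(s)$ on the two sides to agree, which gives
$$
(1-\alpha_i)b^{(i,j)}_{m,k}+\alpha_i b^{(i,j)}_{m,k+1}=v_{mi}\bigl[(1-\beta_i)b^{(i+1,j)}_{m,k}+\beta_i b^{(i+1,j)}_{m,k+1}\bigr],
$$
with $\alpha_i:=(t_i-t_j)/(t_{j+1}-t_j)$ and $\beta_i:=(t_{m+i+2}-t_j)/(t_{j+1}-t_j)$. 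Isolating $b^{(i,j)}_{m,k}$, multiplying the numerator and denominator by $t_{j+1}-t_j$, and substituting $1-\alpha_i=(t_{j+1}-t_i)/(t_{j+1}-t_j)$, $-\alpha_i=(t_j-t_i)/(t_{j+1}-t_j)$ together with the analogous rewrites for the $\beta_i$ terms reproduces exactly~\eqref{E:MainRecurrence}. The only conceptual hurdle is the termwise identity for $L_c$ in the middle paragraph; once that is in hand, the remainder is routine bookkeeping.
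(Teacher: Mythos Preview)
Your proof is correct and follows essentially the same route the paper indicates: substitute the Bernstein--B\'ezier expansion~\eqref{E:BSplineABB} into the differential-recurrence relation of Theorem~\ref{T:NewResult}, reduce both sides to the degree-$(m-1)$ Bernstein basis, and equate coefficients (this is precisely the ``approach presented in \cite[\S4.2]{ChW2023}'' that the paper invokes in lieu of a self-contained proof). Your clean identity $L_c[B^m_k(s)]=\alpha B^{m-1}_{k-1}(s)+(1-\alpha)B^{m-1}_k(s)$ is exactly the mechanism that makes the coefficient comparison immediate, and your check that $t_{j+1}-t_i>0$ (and implicitly that $t_{m+i+2}-t_{i+1}>0$, since $t_{i+1}\le t_j<t_{j+1}\le t_{m+i+2}$ under the hypothesis $j-m<i<j$) justifies the final division.
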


\section{The solution}                                      \label{S:Solution}

Recall that we want to efficiently compute the coefficients $b_{m,k}^{(i,j)}$
for all $k=0,1,\ldots,m$ and all $i=j-m,j-m+1,\ldots,j$, where
$0\leq j\leq n-1$ is fixed and $t_j\neq t_{j+1}$ (see Problem~\ref{P:BSpline2})
assuming that the knot sequence has the form~\eqref{E:KnotSequence} and no
inner knot has multiplicity greater than $m$ (cf.~Remark~\ref{R:Assumption}).

If $m=0$ the problem is trivial, because $b^{(i,j)}_{0,0}=\delta_{i,j}$
(cf.~\eqref{E:BSplineN0iExplicit}). Here $\delta_{i,j}$ denotes the
\textit{Kronecker delta}, i.e.,
\begin{equation}\label{E:KroneckerDelta}
\mbox{$\delta_{i,i}:=1$ and $\delta_{i,j}:=0$ for $i\neq j$}.
\end{equation}

Certainly, in general, if $i<j-m$ or $i>j$, we have
$$
b^{(i,j)}_{m,k}=0 \qquad (0\leq k\leq m)
$$
(cf.~\eqref{E:BSplineSupp}).

\subsection{The main idea}                                     \label{SS:Idea}

Assume that $m>0$. Let us fix $0\leq j\leq n-1$ $(t_j\neq t_{j+1})$ and
$j-m<i<j$. Supposing that the coefficients $b_{m,k}^{(i+1,j)}$ for all
$0\leq k\leq m$ have just been computed and that the value of $b_{m,m}^{(i,j)}$
is known, one can use the recurrence relation~\eqref{E:MainRecurrence} to
compute the coefficients
$$
b_{m,m-1}^{(i,j)}, b_{m,m-2}^{(i,j)}, \ldots, b_{m0}^{(i,j)}
$$
in $O(m)$ time.

It means that to solve Problem~\ref{P:BSpline2} using
Theorem~\ref{T:MainRecurrence} in $O(m^2)$ time --- i.e., to propose the
asymptotically optimal algorithm of computing the adjusted Bernstein-B\'{e}zier
forms of all non-trivial $N_{m,i}$ over non-empty knot span $[t_j,t_{j+1})$ for
all $j-m\leq i\leq j$ --- first, one needs to:
\begin{enumerate}
\itemsep 1ex

\item find the adjusted Bernstein-B\'{e}zier forms of $N_{m,j-m}$ and $N_{m,j}$,

\item evaluate the quantities $b_{m,m}^{(i,j)}$ for all
      $i=j-m+1,j-m+2,\ldots,j-1$

\end{enumerate}
in at most $O(m^2)$ time. We will discuss these issues in the next subsection.

\subsection{The initial conditions}                            \label{SS:Init}

Using~\eqref{E:BSplineRecRel} and~\eqref{E:BSplineSupp}, it is not difficult to
check that for $m>0$ and any knot sequence~\eqref{E:KnotSequence}, we have
\begin{eqnarray*}
&&N_{m,j-m}(u)=\dfrac{(t_{j+1} - t_j)^{m-1}}
                   {\prod_{k=2}^{m}(t_{j+1}-t_{j+1-k})}
                        B^m_0\left(\dfrac{u-t_j}{t_{j+1}-t_j}\right),\\[1.5ex]
&&N_{m,j}(u)=\dfrac{(t_{j+1}-t_j)^{m-1}}
                  {\prod_{k=2}^m(t_{j+k}-t_j)}
                        B^m_m\left(\dfrac{u-t_j}{t_{j+1}-t_j}\right)
                                       \qquad\qquad(t_j\leq u <t_{j+1}),
\end{eqnarray*}
where $0\leq j\leq n-1$ and $t_j\neq t_{j+1}$.

It means that the Bernstein-B\'{e}zier coefficients of $N_{m,j-m}$ and $N_{m,j}$
$(m>0)$ in the non-empty knot span $[t_j,t_{j+1})$ $(0\leq j\leq n-1)$ are as
follows:
\begin{eqnarray}
&&\label{E:BSplineFirstLastExp}
  b_{m,0}^{(j-m,j)}=\dfrac{(t_{j+1}-t_j)^{m-1}}
                       {\prod_{k=2}^{m}(t_{j+1}-t_{j-k+1})},\qquad
  b_{m,k}^{(j-m,j)}=0\quad (k=1,2,\ldots,m),\\
&&\label{E:BSplineFirstLastExp-II}
  b_{m,k}^{(j,j)}=0\quad (k=0,1,\ldots,m-1),\qquad
  b_{m,m}^{(j,j)}=\dfrac{(t_{j+1}-t_j)^{m-1}}{\prod_{k=2}^m (t_{j+k}-t_j)}
\end{eqnarray}
(cf.~\cite[\S4.1]{ChW2023}) and all these coefficients can be computed in
total $O(m)$ time.

Now, we show that the quantities $b^{(i,j)}_{m,m}$ satisfy a simple recurrence
relation with respect to $m$ and $i$.

\begin{theorem}\label{T:InitialRecurrence}
For $m>0$ and any knot sequence~\eqref{E:KnotSequence}, where no inner knot has
multiplicity greater than $m$, the following recurrence relation holds true:
\begin{equation}\label{E:InitialRecurrence}
b_{m,m}^{(i,j)}=\dfrac{t_{j+1}-t_i}
                      {t_{m+i}-t_i}b_{m-1,m-1}^{(i,j)}+
                \dfrac{t_{m+i+1}-t_{j+1}}
                      {t_{m+i+1}-t_{i+1}}b_{m-1,m-1}^{(i+1,j)},
\end{equation}
where $0\leq j\leq n-1$ is fixed, $t_j\neq t_{j+1}$, and
$i=j-m,j-m+1,\ldots,j-1$.
\end{theorem}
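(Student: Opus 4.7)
The plan is to exploit the endpoint evaluation property of the Bernstein basis and then apply the classical de Boor--Cox recurrence~\eqref{E:BSplineRecRel}. The key observation is that $B^m_k(1)=\delta_{k,m}$, so by substituting $u=t_{j+1}$ into the expansion~\eqref{E:BSplineABB} only the last term survives. Consequently, for any non-empty knot span $[t_j,t_{j+1})$ and any degree $p\geq 0$,
$$
b_{p,p}^{(i,j)} \;=\; \lim_{u\to t_{j+1}^-} N_{p,i}(u).
$$
Hence $b_{m,m}^{(i,j)}$, $b_{m-1,m-1}^{(i,j)}$ and $b_{m-1,m-1}^{(i+1,j)}$ are all one-sided limits of the respective B-spline functions at $t_{j+1}$.

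Next, I would apply~\eqref{E:BSplineRecRel} to $N_{m,i}$ and pass to the left-sided limit at $u=t_{j+1}$, giving
$$
\lim_{u\to t_{j+1}^-} N_{m,i}(u)
=\frac{t_{j+1}-t_i}{t_{m+i}-t_i}\lim_{u\to t_{j+1}^-} N_{m-1,i}(u)
+\frac{t_{m+i+1}-t_{j+1}}{t_{m+i+1}-t_{i+1}}\lim_{u\to t_{j+1}^-} N_{m-1,i+1}(u).
$$
Substituting the identifications from the first paragraph then yields~\eqref{E:InitialRecurrence}. All the quantities are well-defined because $N_{m-1,i}$ and $N_{m-1,i+1}$ are (piecewise) polynomials which are certainly left-continuous at $t_{j+1}$, and because the index range $i=j-m,\ldots,j-1$ ensures that both $N_{m-1,i}$ and $N_{m-1,i+1}$ have $[t_j,t_{j+1})$ in the closure of their support (cf.~\eqref{E:BSplineSupp}).

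The only genuinely delicate point, and the one I would treat as the main obstacle, is the handling of the convention~\eqref{E:ConventionII} when a knot collision occurs. If $t_{m+i}=t_i$, then $\mathrm{supp}(N_{m-1,i})=[t_i,t_{m+i})=\emptyset$ by~\eqref{E:BSplineSupp}, so $N_{m-1,i}\equiv 0$ and therefore $b_{m-1,m-1}^{(i,j)}=0$; the corresponding term on the right-hand side of~\eqref{E:InitialRecurrence} is thus $0$ under convention~\eqref{E:ConventionII}, matching the contribution in the de Boor--Cox formula. The symmetric case $t_{m+i+1}=t_{i+1}$ is handled identically, and the assumption of Remark~\ref{R:Assumption} rules out any more severe degeneracy for an inner knot. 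This verifies the formula in every admissible configuration and completes the proof.
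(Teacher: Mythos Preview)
Your argument is correct, but it is genuinely different from the paper's. The paper substitutes the Bernstein--B\'{e}zier expansions~\eqref{E:BSplineABB} of $N_{m,i}$, $N_{m-1,i}$ and $N_{m-1,i+1}$ directly into the de Boor--Cox recurrence~\eqref{E:BSplineRecRel}, then uses the identities $tB^{m-1}_k(t)=\tfrac{k+1}{m}B^m_{k+1}(t)$ and the degree-elevation formula to rewrite everything in the degree-$m$ Bernstein basis; comparing the coefficients of $B^m_m(t)$ on both sides gives~\eqref{E:InitialRecurrence}. You instead bypass all of this algebra by observing that $b_{p,p}^{(i,j)}=\lim_{u\to t_{j+1}^-}N_{p,i}(u)$ and simply passing to the left limit in~\eqref{E:BSplineRecRel}. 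Your route is shorter and more conceptual for the stated claim; what the paper's computation buys is the full identity~\eqref{E:DeBoorCoxMethod}, which yields the recurrence for \emph{every} coefficient $b_{m,k}^{(i,j)}$ (Remark~\ref{R:DeBoorCoxMethod}) and is later used as the $O(m^3)$ reference scheme in the numerical experiments.

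One small inaccuracy worth tightening: your sentence that ``both $N_{m-1,i}$ and $N_{m-1,i+1}$ have $[t_j,t_{j+1})$ in the closure of their support'' fails at the boundary case $i=j-m$, where $\mathrm{supp}(N_{m-1,j-m})=[t_{j-m},t_j)$. This does not damage the proof, since $N_{m-1,j-m}\equiv 0$ on $[t_j,t_{j+1})$ and hence $\lim_{u\to t_{j+1}^-}N_{m-1,j-m}(u)=0=b_{m-1,m-1}^{(j-m,j)}$ anyway, but the justification you give there should be replaced by this vanishing observation. Also note that for $i$ in the stated range one always has $t_{m+i+1}\ge t_{j+1}>t_j\ge t_{i+1}$, so the ``symmetric case'' $t_{m+i+1}=t_{i+1}$ in fact never occurs; only the first denominator can collapse (and only when $i=j-m$).
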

\begin{proof}
In the proof we use the approach similar to the one presented
in~\cite{CR04,RS04}.

It is well-known that Bernstein polynomials~\eqref{E:Def_BernPoly} satisfy
\begin{eqnarray}
\label{E:BernT}
tB^n_k(t)&=&\dfrac{k+1}{n+1} B^{n+1}_{k+1}(t),\\
\label{E:BernDegUp}
B^n_k(t)&=&\dfrac{n-k+1}{n+1} B^{n+1}_{k}(t)
            +\dfrac{k+1}{n+1} B^{n+1}_{k+1}(t),
\end{eqnarray}
where $0\leq k\leq n$.

Let $u\in[t_j,t_{j+1})$, where $0\leq j\leq n-1$ and $t_j\neq t_{j+1}$. Set
$$
t\equiv t^{(j)}(u):=\dfrac{u-t_j}{t_{j+1}-t_j}
$$
(cf.~\eqref{E:BSplineABB}). Certainly, $t\in[0,1)$, and
\begin{equation}\label{E:UofT}
u=(t_{j+1}-t_j)t+t_j.
\end{equation}

Assume $i=j-m,j-m+1,\ldots,j-1$ (cf.~\eqref{E:BSplineSupp}).
Applying~\eqref{E:BSplineABB} to~\eqref{E:BSplineRecRel} gives
$$
\sum_{k=0}^m b^{(i,j)}_{m,k} B^m_k(t)
 =\dfrac{u-t_i}{t_{m+i}-t_i}\sum_{k=0}^{m-1}b^{(i,j)}_{m-1,k}B^{m-1}_k(t)
        +\dfrac{t_{m+i+1}-u}{t_{m+i+1}-t_{i+1}}
                     \sum_{k=0}^{m-1}b^{(i+1,j)}_{m-1,k}B^{m-1}_k(t).
$$
Using~\eqref{E:UofT} gives, after some algebra,
\begin{multline*}
\sum_{k=0}^m b^{(i,j)}_{m,k}B^m_k(t)=
  \sum_{k=0}^{m-1}\left(\dfrac{b^{(i,j)}_{m-1,k}}
                             {t_{m+i}-t_i}
  - \dfrac{b^{(i+1,j)}_{m-1,k}}
          {t_{m+i+1}-t_{i+1}}\right)(t_{j+1}-t_j)tB^{m-1}_k(t)\\
  + \sum_{k=0}^{m-1}\left(\dfrac{b^{(i,j)}_{m-1,k}(t_j - t_i)}{t_{m+i}-t_i}
  + \dfrac{b^{(i+1,j)}_{m-1,k}(t_{m+i+1}-t_j)}
          {t_{m+i+1}-t_{i+1}}\right)B^{m-1}_k(t).
\end{multline*}
Applying~\eqref{E:BernDegUp} and~\eqref{E:BernT} gives, then, after some more
algebra,
\begin{multline}\label{E:DeBoorCoxMethod}
\sum_{k=0}^m b^{(i,j)}_{m,k}B^m_k(t) =
  \sum_{k=0}^{m-1}\left(\dfrac{b^{(i,j)}_{m-1,k}(t_j-t_i)}{t_{m+i}-t_i}
  + \dfrac{ b^{(i+1,j)}_{m-1,k}(t_{m+i+1}-t_j)}{t_{m+i+1}-t_{i+1}}
                  \right)
                                                  \dfrac{m-k}{m}B^{m}_{k}(t)\\
  + \sum_{k=1}^{m}\left(
  \dfrac{b^{(i,j)}_{m-1,k-1}(t_{j+1}-t_i)}{t_{m+i}-t_i}
  + \dfrac{b^{(i+1,j)}_{m-1,k-1}(t_{m+i+1}-t_{j+1})}{t_{m+i+1}-t_{i+1}}
                  \right)\dfrac{k}{m} B^{m}_{k}(t).
\end{multline}
Matching the coefficients for $B^m_m(t)$ justifies the recurrence
relation~\eqref{E:InitialRecurrence}.
\end{proof}

Observe that it is possible to evaluate the quantities $b^{(i,j)}_{m,m}$ for
$m>1$ and all $i=j-m+1,j-m+2,\ldots,j-1$ in $O(m^2)$ time using the recurrence
relation~\eqref{E:InitialRecurrence}, with the initial conditions
\begin{eqnarray}
\label{E:InitCondInitialRecurrenceI}
&&b_{p,p}^{(i,j)}:=0 \quad (i<j-p),\qquad
                                       b_{0,0}^{(i,j)}:=\delta_{i,j},\\[1.25ex]
\label{E:InitCondInitialRecurrenceII}
&&
b_{1,1}^{(j,j)}:=1,\qquad
b_{p,p}^{(j,j)}:=\dfrac{t_{j+1}-t_j}
                       {t_{j+p}-t_j}\cdot b_{p-1,p-1}^{(j,j)} \quad (p>0)
\end{eqnarray}
(see~\eqref{E:BSplineSupp}, \eqref{E:KroneckerDelta},
and~\eqref{E:BSplineFirstLastExp-II}).

\begin{remark}\label{R:DeBoorCoxMethod}
From Eq.~\eqref{E:DeBoorCoxMethod}, it follows that
\begin{eqnarray*}
b^{(i,j)}_{m,k}&=&\dfrac{m-k}{m}
     \left(\dfrac{b^{(i,j)}_{m-1,k}(t_j-t_i)}{t_{m+i}-t_i}
                +\dfrac{ b^{(i+1,j)}_{m-1,k}(t_{m+i+1}-t_j)}
                                            {t_{m+i+1}-t_{i+1}}\right)\\
  &&\hspace{3cm}
   +\dfrac{k}{m}
       \left(\dfrac{b^{(i,j)}_{m-1,k-1}(t_{j+1}-t_i)}{t_{m+i}-t_i}
                     +\dfrac{b^{(i+1,j)}_{m-1,k-1}(t_{m+i+1}-t_{j+1})}
                                                  {t_{m+i+1}-t_{i+1}}\right),
\end{eqnarray*}
where $0\leq k\leq m$ (cf.~the convention~\eqref{E:ConventionII}). Observe that
one can use this recurrence scheme to solve Problem~\ref{P:BSpline2} in $O(m^3)$
time.
\end{remark}

\subsection{The algorithm}                                \label{SS:Algorithm}

The results presented in Theorem~\ref{T:MainRecurrence}, as well as
in~\S\ref{SS:Idea} and~\S\ref{SS:Init} can be combined to prove the following
theorem.

\begin{theorem}\label{T:BSplineBezierBig}
Suppose that $m>0$, the knot sequence has the form~\eqref{E:KnotSequence} and
no inner knot has multiplicity greater than $m$. Let us fix $j$ $(0\leq j<n)$.
Assume that knot span $[t_j, t_{j+1})$ is non-empty.

The $(m+1)^2$ adjusted Bernstein-B\'{e}zier coefficients $b_{m,k}^{(i,j)}$
(cf.~\eqref{E:BSplineABB}) of the basis B-spline functions
$N_{m,i}$~\eqref{E:DefBSpline} over the knot span $[t_j, t_{j+1})$, for
$i=j-m,j-m+1,\ldots,j$ and $k=0,1,\ldots,m$, can be computed in the
computational complexity $O(m^2)$ in the following~way:
\begin{enumerate}
\itemsep1ex

\item Set $b_{0,0}^{(j,j)}=1$. Then, for $p=1,2,\ldots,m$, compute
      $b_{p,p}^{(j,j)}=\dfrac{t_{j+1}-t_j}{t_{j+p}-t_j}b_{p-1,p-1}^{(j,j)}$.

\item For $p=1,2,\ldots,m$ and $i=j-1,j-2,\ldots,j-p$ compute $b_{p,p}^{(i,j)}$
      using Eqs~\eqref{E:InitialRecurrence},
      \eqref{E:InitCondInitialRecurrenceI} and the
      convention~\eqref{E:ConventionII}.

\item For $k=0,1,2,\ldots,m$, compute $b_{m,k}^{(j-m,j)}$
      using Eq.~\eqref{E:BSplineFirstLastExp}.

\item For $k=m-1, m-2, \ldots, 0$ and $i=j-1, j-2, \ldots, j-m+1$,
      compute the coefficients $b_{m,k}^{(i,j)}$ using
      Eq.~\eqref{E:MainRecurrence}.

\end{enumerate}
\end{theorem}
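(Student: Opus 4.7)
My plan is to establish the theorem in three stages: verifying that each of the four steps correctly produces the coefficients it claims to, checking that the prescribed order respects every data dependency, and summing up to confirm the $O(m^2)$ bound.

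For correctness I would simply invoke the already-proven identities. Step~1 is the specialization to $i=j$ of the diagonal recurrence~\eqref{E:InitCondInitialRecurrenceII}, whose base case $b_{1,1}^{(j,j)}=1$ follows from~\eqref{E:BSplineFirstLastExp-II}. Step~2 is a direct application of Theorem~\ref{T:InitialRecurrence} for $i=j-1,j-2,\ldots,j-p$; the boundary case $i=j-p$ is well-defined because the ``missing'' term $b_{p-1,p-1}^{(j-p,j)}$ vanishes by~\eqref{E:InitCondInitialRecurrenceI} (itself an instance of~\eqref{E:BSplineSupp}), and a possibly vanishing denominator is absorbed by convention~\eqref{E:ConventionII}. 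Step~3 is precisely the closed form~\eqref{E:BSplineFirstLastExp}. Step~4 is a direct application of the recurrence~\eqref{E:MainRecurrence} from Theorem~\ref{T:MainRecurrence}, whose hypothesis $j-m<i<j$ holds for every $i=j-1,j-2,\ldots,j-m+1$.

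The main (though routine) obstacle is the dependency check. After Steps~1 and~2 every diagonal value $b_{m,m}^{(i,j)}$ for $j-m\leq i\leq j$ has been produced, since Step~2 builds level $p$ from level $p-1$. Step~3 then supplies the full row $i=j-m$. In Step~4 the outer loop processes $i=j-1,j-2,\ldots,j-m+1$; for $i=j-1$ the required row $i+1=j$ consists of the trivial values $b_{m,k}^{(j,j)}=\delta_{k,m}\, b_{m,m}^{(j,j)}$ coming from Step~1 and~\eqref{E:BSplineFirstLastExp-II}, while for every subsequent $i$ the needed row $i+1$ has just been produced by the preceding outer iteration. The inner loop decreases $k$ from $m-1$ to $0$, using the already-computed $b_{m,k+1}^{(i,j)}$ and starting from $b_{m,m}^{(i,j)}$ supplied by Step~2, so each recurrence call is well-posed.

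Finally I would count operations. Step~1 costs $O(m)$, Step~2 fills a triangle of size $\sum_{p=1}^{m}p=O(m^2)$ with $O(1)$ work per entry, Step~3 costs $O(m)$ (only one nonzero entry, involving a product of $m-1$ factors), and Step~4 performs $O(1)$ work for each of the $(m-1)m$ pairs $(i,k)$, totaling $O(m^2)$. The grand total is $O(m^2)$, matching the $(m+1)^2$ output size and therefore asymptotically optimal.
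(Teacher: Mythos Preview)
Your proof is correct and follows essentially the same approach as the paper, which simply states that the theorem results from combining Theorem~\ref{T:MainRecurrence} with \S\ref{SS:Idea} and \S\ref{SS:Init}; you have merely made this synthesis explicit. One minor remark: in your dependency analysis for Step~4 you treat $i$ as the outer loop and $k$ as the inner loop, whereas the theorem statement (and Algorithm~\ref{A:BSplineBezForm}) nests them the other way --- both orderings respect the dependencies of~\eqref{E:MainRecurrence}, so the argument carries over unchanged.
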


\begin{figure*}[ht!]
  \centering
  \begin{tikzpicture}
  \foreach \row in {0,...,6}
    \draw (\row, 0) -- (\row, 6);
  \foreach \col in {0,...,6}
    \draw (0, \col) -- (6, \col);

  \foreach \y in {4,...,4}
  {
    \draw[->] (0.8, \y + 0.5) -- (1.2, \y + 0.5);
    \draw[->] (0.8, \y + 1.2) -- (1.2, \y + 0.8);
  }
  \foreach \y in {3,...,4}
  {
    \draw[->] (1.8, \y + 0.5) -- (2.2, \y + 0.5);
    \draw[->] (1.8, \y + 1.2) -- (2.2, \y + 0.8);
  }
  \foreach \y in {2,...,4}
  {
    \draw[->] (2.8, \y + 0.5) -- (3.2, \y + 0.5);
    \draw[->] (2.8, \y + 1.2) -- (3.2, \y + 0.8);
  }
  \foreach \y in {1,...,4}
  {
    \draw[->] (3.8, \y + 0.5) -- (4.2, \y + 0.5);
    \draw[->] (3.8, \y + 1.2) -- (4.2, \y + 0.8);
  }
  \foreach \y in {0,...,4}
  {
    \draw[->] (4.8, \y + 0.5) -- (5.2, \y + 0.5);
    \draw[->] (4.8, \y + 1.2) -- (5.2, \y + 0.8);
  }
  \foreach \x in {0,...,4}
  {
  \draw[-] (\x + 0.5, 5.8) -- (\x + 0.5, 6.2) -- (\x + 1.2, 6.2);
  \draw[->] (\x + 1.2, 6.2) -- (\x + 1.2, 5.8);
  }
  \node[above] at (3, 6.2) {\mbox{Eq.~\eqref{E:InitCondInitialRecurrenceII}}};

  \node[left] at (0, 0.5) {$i=j-5$};
  \node[left] at (0, 1.5) {$i=j-4$};
  \node[left] at (0, 2.5) {$i=j-3$};
  \node[left] at (0, 3.5) {$i=j-2$};
  \node[left] at (0, 4.5) {$i=j-1$};
  \node[left] at (0, 5.5) {$i=j$};

  \node[below] at (0.5, 0) {$p=0$};
  \node[below] at (1.5, 0) {$1$};
  \node[below] at (2.5, 0) {$2$};
  \node[below] at (3.5, 0) {$3$};
  \node[below] at (4.5, 0) {$4$};
  \node[below] at (5.5, 0) {$5$};

  \foreach \y in {0,...,4}
      \node at (0.5, \y + 0.5) {$0$};
  \foreach \y in {0,...,3}
      \node at (1.5, \y + 0.5) {$0$};
  \foreach \y in {0,...,2}
      \node at (2.5, \y + 0.5) {$0$};
  \foreach \y in {0,...,1}
      \node at (3.5, \y + 0.5) {$0$};
  \node at (4.5, 0.5) {$0$};

  \node at (0.5, 5.5) {$1$};
\end{tikzpicture}
\caption{An illustration of using Eq.~\eqref{E:InitialRecurrence}
        (cf.~\eqref{E:InitCondInitialRecurrenceI}) for $m=5$.}\label{F:Initial}
\end{figure*}
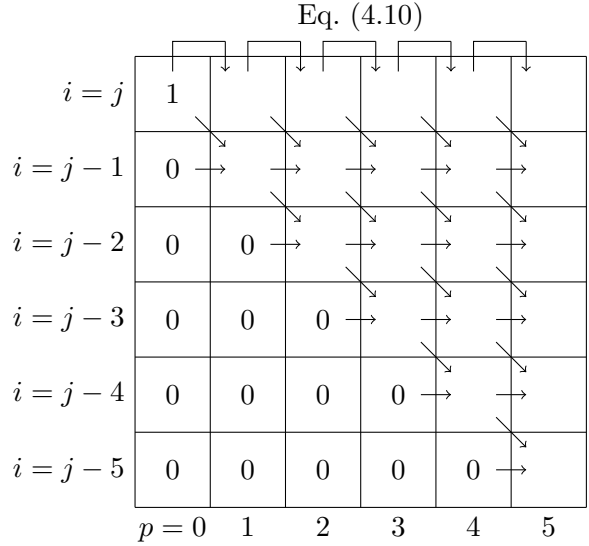

Figure~\ref{F:Initial} illustrates the computations done in the first two steps
of Theorem~\ref{T:BSplineBezierBig}. The rightmost column is then inserted into
the rightmost column of Figure~\ref{F:Full}, which illustrates the remaining
steps.

\begin{figure*}[ht!]
  \centering
  \begin{tikzpicture}
  \foreach \row in {0,...,6}
    \draw (\row, 0) -- (\row, 6);
  \foreach \col in {0,...,6}
    \draw (0, \col) -- (6, \col);

  \foreach \x in {0,...,4}
  {
    \foreach \y in {1,...,4}
    {
      \draw[->] (\x + 1.2, \y + 0.5) -- (\x + 0.8, \y + 0.5);
      \draw[->] (\x + 1.2, \y + 1.2) -- (\x + 0.8, \y + 0.8);
      \draw[->] (\x + 0.5, \y + 1.2) -- (\x + 0.5, \y + 0.8);
    }
  }

  \node[right] at (7.5, 3.0) {Last column of Fig.~\ref{F:Initial}};
  \draw[-] (6.5, 3.0) -- (7.5, 3.0);
  \draw[-] (6.5, 5.5) -- (6.5, 1.5);
  \foreach \y in {1,...,5}
    \draw[->] (6.5, \y + 0.5) -- (5.5, \y + 0.5);

  \node[right] at (7.5, 0.25) {Eq.~\eqref{E:BSplineFirstLastExp}};
  \draw[-] (7.5, 0.25) -- (0.5, 0.25);
  \draw[->] (0.5, 0.25) -- (0.5, 0.5);

  \foreach \x in {0,...,4}
  {
    \node at (\x + 0.5, 5.5) {$0$};
  }
  \foreach \x in {1,...,5}
  {
    \node at (\x + 0.5, 0.5) {$0$};
  }

  \node[left] at (0, 0.5) {$i=j-5$};
  \node[left] at (0, 1.5) {$i=j-4$};
  \node[left] at (0, 2.5) {$i=j-3$};
  \node[left] at (0, 3.5) {$i=j-2$};
  \node[left] at (0, 4.5) {$i=j-1$};
  \node[left] at (0, 5.5) {$i=j$};

  \node[below] at (0.5, 0) {$k=0$};
  \node[below] at (1.5, 0) {$1$};
  \node[below] at (2.5, 0) {$2$};
  \node[below] at (3.5, 0) {$3$};
  \node[below] at (4.5, 0) {$4$};
  \node[below] at (5.5, 0) {$5$};
\end{tikzpicture}
\caption{An illustration of the full recurrence scheme using
         Theorem~\ref{T:MainRecurrence} for $m=5$.}\label{F:Full}
\end{figure*}
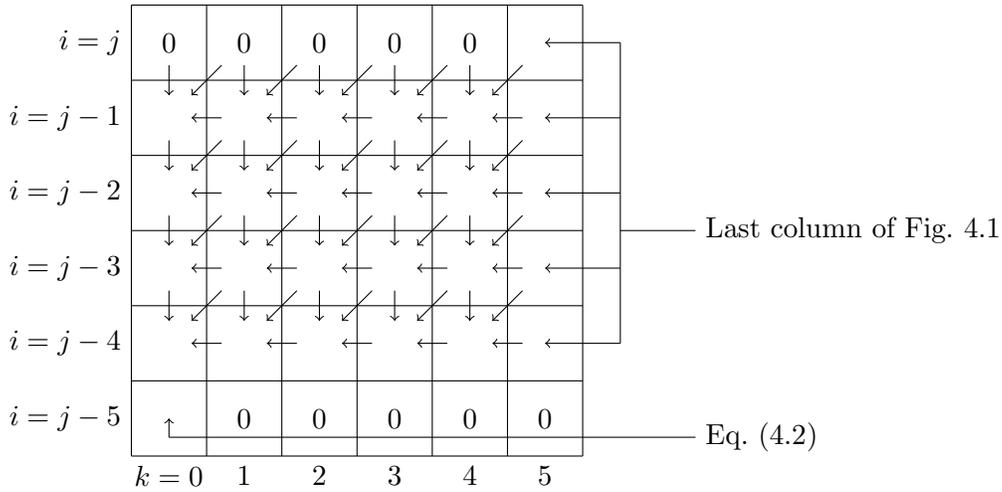

Algorithm~\ref{A:BSplineBezForm} implements the approach proposed in
Theorem~\ref{T:BSplineBezierBig}. This algorithm returns an array
$B\equiv B[0..m, j-m..j]$, where
$$
B[k,i]=b^{(i,j)}_{m,k}\qquad (j-m\leq i<j,\; 0\leq k\leq m)
$$
(cf.~\eqref{E:BSplineABB}).

The complexity of Algorithm~\ref{A:BSplineBezForm} is $O(m^2)$ --- giving the
optimal $O(1)$ time per coefficient.

To examine the numerical behavior of the new algorithm (complexity $O(m^2)$),
we have compared it to the method mentioned in Remark~\ref{R:DeBoorCoxMethod}
(complexity $O(m^3)$). Numerical experiments show that both methods are stable
and give almost the same results.
As expected, the new algorithm is usually faster, especially for larger $m$.
The source code in C which was used to perform the tests is available at
\url{https://github.com/filipchudy/bspline-coeffs-one-interval}.
The results have been obtained on a computer with \texttt{Intel
Core i5-6300U CPU at 2.40GHz} processor and 8GB RAM, using
\texttt{GNU C Compiler 13.2.0} (double precision).

\begin{example}\label{Ex:Numerical}
An experiment which checks the numerical quality of the method given 
in~\cite{ChW2023} and the new algorithm has been done. For 
$m \in \{3, 4, 5, 10, 20, 30, 50\}$ and $n \in \{10, 50, 100\}$, the following 
procedure has been repeated 50,000 times.

A sequence of knots has been generated in the following way. A sequence of 
knots has been randomized using the \texttt{rand} C function. The first knot is 
sampled from the $[-10, 10]$ interval. Then, the following knots are sampled, 
with randomized multiplicities and distances, from $1, 2, \ldots, m$ and 
$(0, 0.5)$, respectively. Afterwards, all right-hand side boundary knots are 
clamped.

Then, all Bernstein-B\'{e}zier coefficients of the B-spline functions are 
evaluated using the method given in~\cite{ChW2023}, the new algorithm, and the 
approach given in Remark~\ref{R:DeBoorCoxMethod}. The evaluation has been done 
for each knot span. Table~\ref{T:Numerical} shows the mean number of correct 
decimal digits for the first two methods, with reference results being the ones 
obtained by the third method, which uses the de Boor-Cox formula. The mean 
number is over all $(m + 1)^2$ Bernstein-B\'{e}zier coefficients of B-spline 
functions in each knot span, over all non-empty knot spans, for 50,000 
different knot sequences.
\end{example}

\begin{table*}[ht!]\small
\begin{center}
\renewcommand{\arraystretch}{1.3}
\begin{tabular}{lccc}
$m$ & $n$ & mean~\cite{ChW2023} & mean new \\ \hline
3 & 10 &  13.930 &  17.858 \\
3 & 50 &  13.952 &  17.859 \\
3 & 100 &  13.959 &  17.860 \\ \hline
4 & 10 &  13.895 &  17.762 \\
4 & 50 &  13.902 &  17.763 \\
4 & 100 &  13.906 &  17.764 \\ \hline
5 & 10 &  13.817 &  17.650 \\
5 & 50 &  13.805 &  17.653 \\
5 & 100 &  13.804 &  17.654 \\ \hline
10 & 10 &  13.533 &  17.310 \\
10 & 50 &  13.402 &  17.317 \\
10 & 100 &  13.382 &  17.319 \\ \hline
20 & 10 &  13.642 &  17.031 \\
20 & 50 &  13.072 &  16.943 \\
20 & 100 &  12.985 &  16.946 \\ \hline
30 & 10 &  13.792 &  16.905 \\
30 & 50 &  12.861 &  16.646 \\
30 & 100 &  12.706 &  16.651 \\ \hline
50 & 10 &  13.903 &  16.775 \\
50 & 50 &  12.532 &  16.137 \\
50 & 100 &  12.303 &  16.150 \\ \hline
\end{tabular}
\renewcommand{\arraystretch}{1}
\caption{The mean number of correct digits for Example~\ref{Ex:Numerical}.
The source code in C which was used to perform
the tests is available at
\url{https://github.com/filipchudy/bspline-coeffs-one-interval}.}\label{T:Numerical}
\vspace{-3ex}
\end{center}
\end{table*}

\begin{example}\label{Ex:Time}
A comparison of running times for the new algorithm and the method given in 
Remark~\ref{R:DeBoorCoxMethod} has been done. For 
$m \in \{3, 4, 5, 10, 20, 30, 50\}$ and $n \in \{10, 50, 100\}$, the following 
procedure has been repeated 50,000 times.

A sequence of knots has been generated in the following way. A sequence of 
knots has been randomized using the \texttt{rand} C function. The first knot is 
sampled from the $[-10, 10]$ interval. Then, the following knots are sampled, 
with randomized multiplicities and distances, from $1, 2, \ldots, m$ and 
$(0, 0.5)$, respectively. Unlike Example~\ref{Ex:Numerical}, the right-hand 
side boundary knots are not clamped.

Then, all Bernstein-B\'{e}zier coefficients of the B-spline functions over all 
non-empty knot spans are evaluated using the new algorithm and the method given 
in Remark~\ref{R:DeBoorCoxMethod}. Table~\ref{T:Time} shows the comparison of 
running times in seconds, for different $m, n$, for all 50,000 tests.
\end{example}

\begin{table*}[ht!]\small
\begin{center}
\renewcommand{\arraystretch}{1.3}
\begin{tabular}{lccc}
$m$ & $n$ & new & deBoor \\ \hline
3 & 10 & 0.158 & 0.180 \\
3 & 50 & 0.523 & 0.643 \\
3 & 100 & 0.962 & 1.206 \\ \hline
4 & 10 & 0.190 & 0.238 \\
4 & 50 & 0.668 & 0.921 \\
4 & 100 & 1.261 & 1.770 \\ \hline
5 & 10 & 0.226 & 0.309 \\
5 & 50 & 0.843 & 1.270 \\
5 & 100 & 1.602 & 2.448 \\ \hline
10 & 10 & 0.404 & 0.847 \\
10 & 50 & 1.730 & 3.916 \\
10 & 100 & 3.377 & 7.742 \\ \hline
20 & 10 & 0.746 & 2.670 \\
20 & 50 & 3.561 & 13.452 \\
20 & 100 & 7.152 & 27.334 \\ \hline
30 & 10 & 1.099 & 5.635 \\
30 & 50 & 5.397 & 28.847 \\
30 & 100 & 10.766 & 57.869 \\ \hline
50 & 10 & 1.811 & 14.814 \\
50 & 50 & 9.176 & 77.182 \\
50 & 100 & 18.226 & 153.975 \\ \hline
\end{tabular}
\renewcommand{\arraystretch}{1}
\caption{Running times comparison (in seconds) for
Example~\ref{Ex:Time}. The source code in C which was used to perform
the tests is available at
\url{https://github.com/filipchudy/bspline-coeffs-one-interval}.}\label{T:Time}
\vspace{-3ex}
\end{center}
\end{table*}

\begin{algorithm}[ht!]
\caption{Computing the coefficients of the adjusted Bernstein-B\'{e}zier form
         of the B-spline functions over one knot span}\label{A:BSplineBezForm}
\begin{algorithmic}[1]
\Procedure {BSplineBBFOneKnotSpan}{$n, m, j, [t_{-m},t_{-m+1},\ldots,t_{n+m}]$}
\State $Bm \gets \texttt{Array[0..m, j-m..j](fill=0)}$
\State $Bm[0, j] \gets 1$
\For {$p=1,m$}
  \State $Bm[p, j] \gets \dfrac{t_{j+1}-t_j}{t_{j+p}-t_j}\times Bm[p-1, j]$
\EndFor

\For {$p=1,m$}
  \For {$i=j-1,j-p$}
      \State \algorithmiccomment{With the convention~\eqref{E:ConventionII}.}
      \State $Bm[p,i] \gets \dfrac{t_{j+1}-t_i}{t_{p+i}-t_i} \times Bm[p-1, i]
                          +\dfrac{t_{p+i+1}-t_{j+1}}{t_{p+i+1}-t_{i+1}}
                                                           \times Bm[p-1, i+1]$
  \EndFor
\EndFor
\State $B \gets \texttt{Array[0..m, j-m..j](fill=0)}$
\State $B[0, j-m] \gets \dfrac{(t_{j+1}-t_j)^{m-1}}
                              {\prod_{k=2}^m (t_{j+1}-t_{j-k+1})}$
\State $B[m, j-m..j] \gets Bm[m, j-m..j]$
\For {$k=m-1,0$}
  \For {$i=j-1, j-m+1$}
    \State $v_{mi} \gets \dfrac{t_{m+i+1}-t_i}{t_{m+i+2}-t_{i+1}}$
    \State $c_i \gets (t_{j+1}-t_{m+i+2})B[k, i+1]+(t_{m+i+2}-t_j)B[k+1, i+1]$
    \State $B[k, i] \gets \dfrac{t_j-t_i}{t_{j+1}-t_i}B[k+1,i]+
                                          \dfrac{v_{mi}}{t_{j+1}-t_i}c_i$
  \EndFor
\EndFor
\State \Return $B$
\EndProcedure
\end{algorithmic}
\end{algorithm}

\section{Conclusions}                                    \label{S:Conclusions}

Two new differential-recurrence relations for the B-spline functions are given.
They can be used to relax the assumptions~\eqref{E:KnotClamped} about the knot
sequence and give an alternative proof to the differential-recurrence
relation~\eqref{E:BSplineDiffRecBase} for the B-spline functions of the same
degree which was given in~\cite{ChW2023}, along with an algorithm for computing
the coefficients of B-spline functions of degree $m$ in the adjusted
Bernstein-B\'{e}zier form. The algorithm~\cite[Thm 4.4, Alg.~4.1 and
\S6]{ChW2023} had a drawback --- the requirement to evaluate the coefficients
over multiple knot spans. We have eliminated this constraint, thus making the
computations independent for each knot span.

The new algorithm given in this manuscript is asymptotically optimal ---
computing $(m + 1)^2$ coefficients in $O(m^2)$ time --- and allows the user to
evaluate the coefficients only in the necessary knot spans. Moreover, if needed,
the computations for multiple knots spans can be done in parallel, as they are
fully independent. The method has good numerical behavior.

When the Bernstein-B\'{e}zier coefficients of B-spline basis functions are 
known, it is possible to compute any B-spline function in linear time with 
respect to its degree by performing the geometric algorithm proposed recently
in~\cite{WCh2020}. As shown in~\cite{ChW2023}, this can accelerate the 
evaluation when multiple B-spline curves or surfaces use the same knot sequence.

\bibliographystyle{elsart-num-sort}
\biboptions{sort&compress}
\bibliography{BSplineBF-one-rev-elsart}

\end{document}